\documentclass{amsart}

\usepackage{amssymb}
\usepackage{amsfonts}
\usepackage{amsthm}
\usepackage{epsfig,amsmath}
\usepackage{graphicx}
\usepackage{caption}
\usepackage{subcaption}
\usepackage{wrapfig}

\usepackage[margin=1.00 in]{geometry}

\makeatletter
\def\ps@pprintTitle{%
 \let\@oddhead\@empty
 \let\@evenhead\@empty
 \def\@oddfoot{}%
 \let\@evenfoot\@oddfoot}
\makeatother


\newtheorem{theorem}{Theorem}[section]
\newtheorem{definition}{Definition}
\newtheorem{lemma}[theorem]{Lemma}

\newtheorem{conjecture}[theorem]{Conjecture}



\begin{document}

\title{Volume and determinant densities of hyperbolic rational links}

\begin{abstract}
The \emph{volume density} of a hyperbolic link is defined as the ratio of hyperbolic volume to crossing number. We study its properties and a closely-related invariant called the \emph{determinant density}. It is known that the sets of volume densities and determinant densities of links are dense in the interval $[0,v_{oct}]$. We construct sequences of alternating knots whose volume and determinant densities both converge to any $x \in [0,v_{oct}]$. We also investigate the distributions of volume and determinant densities for hyperbolic rational links, and establish upper bounds and density results for these invariants.
\end{abstract}

\author[C. Adams]{Colin Adams}
\address[Colin Adams]{Williams College}
\email{colin.c.adams@williams.edu}

\author[A. Calderon]{Aaron Calderon}
\address[Aaron Calderon]{University of Nebraska-Lincoln}
\email{aaron.calderon@huskers.unl.edu}

\author[X. Jiang]{Xinyi Jiang}
\address[Xinyi Jiang]{Stanford University}
\email{xinyij@stanford.edu}

\author[A. Kastner]{Alexander Kastner}
\address[Alexander Kastner]{Williams College}
\email{ask2@williams.edu}

\author[G. Kehne]{Gregory Kehne}
\address[Gregory Kehne]{Williams College}
\email{gtk1@williams.edu}

\author[N. Mayer]{Nathaniel Mayer}
\address[Nathaniel Mayer]{Harvard University}
\email{nmayer26@gmail.com}

\author[M. Smith]{Mia Smith}
\address[Mia Smith]{Williams College}
\email{mia.smith25@gmail.com}

\date{\today}

\maketitle

\section{Introduction}

There is a deep connection between the hyperbolic volume of links and the combinatorics of their diagrams, which is not fully understood. In \cite{Champanerkar}, Champanerkar, Kofman and Purcell studied the relationship between the volumes and the determinants of links. We continue this investigation, focusing on the special case of rational (2-bridge) links.

The \emph{volume density} $\mathcal{D}_{vol}$ of a hyperbolic link $L$  is the ratio of volume to crossing number:
\[\mathcal{D}_{vol}(L) := \frac{vol(L)}{c(L)}.\]
D. Thurston showed that the complement of a link $L$ in $S^3$ can be decomposed by placing an octahedron at each crossing, as explained in \cite[Section 5]{Adams2012}. Since the volume of a hyperbolic octahedron is bounded above by $v_{oct} = 3.6638 \dots$, the volume of an ideal regular octahedron, we have the upper bound
\[\mathcal{D}_{vol}(L) \le v_{oct}.\]
This decomposition was modified slightly in \cite{Adams2012} to yield $vol(L) \le v_{oct} (c(L)-5) + 4 v_{tet}$ for any hyperbolic link $L$ with $c(L) \ge 5$, where $v_{tet} = 1.0149 \dots$ is the volume of an ideal regular tetrahedron. Hence, the volume density of any link is strictly less than $v_{oct}$.

The authors of \cite{Champanerkar} defined the \emph{determinant density} for a link $L$, as follows:
\[\mathcal{D}_{\det}(L) := \frac{2 \pi \log \det(L)}{c(L)},\]
where the \emph{determinant} of a link $L$ is given by $\det(L) := |\Delta_L(-1)|$ and $\Delta_L$ denotes the Alexander polynomial of $L$. For alternating links, the determinant is equal to the number of spanning trees of either Tait graph (see \cite{Crowell1959}). 
Data produced by Dunfield \cite{Dunfield} suggests that the volume and determinant densities are closely related. The following conjecture appears in \cite{Champanerkar} and is equivalent to a conjecture by Kenyon about planar graphs \cite{Kenyon}:

\begin{conjecture}
\label{Determinant density UB}
For any link $L$,
\[\mathcal{D}_{\det}(L) \le v_{oct}.\]
\end{conjecture}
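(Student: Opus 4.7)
The plan is to reduce Conjecture \ref{Determinant density UB} to a purely combinatorial statement about spanning trees of planar graphs, and then attempt to establish that statement. For an alternating link $L$ in a reduced alternating diagram, the crossing number equals the number of edges $|E(G_L)|$ of either Tait checkerboard graph $G_L$, and by Crowell's theorem $\det(L) = \tau(G_L)$, the number of spanning trees of $G_L$. Hence the conjecture for alternating links is equivalent to the inequality
\[
\frac{\log \tau(G)}{|E(G)|} \le \frac{v_{oct}}{2\pi}
\]
for every connected planar graph $G$. To extend this to all hyperbolic links, I would attempt to show that among all links sharing a fixed $4$-valent projection, the determinant is maximized by the alternating choice of crossings; this would reduce the general case to the alternating one. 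Such monotonicity is plausible but delicate, since $\det(L)$ is a signed state sum that does not behave monotonically under arbitrary crossing changes, and a rigorous argument will likely require the machinery of Turaev genus or adequate diagrams.

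The heart of the argument is the spanning-tree inequality above. I would pursue two complementary strategies. The first is analytic: by work of Burton and Pemantle, the asymptotic spanning-tree entropy per edge of a periodic planar lattice equals the Mahler measure of an associated characteristic polynomial, and for $\mathbb{Z}^2$ this Mahler measure equals $4G/\pi = v_{oct}/(2\pi)$, where $G$ is Catalan's constant. One would then seek a Rayleigh-type comparison principle that dominates the spanning-tree entropy of an arbitrary finite planar graph by that of the square lattice. The second strategy is variational: view $\log \tau(G)/|E(G)|$ as the entropy of the uniform spanning-tree measure on $G$ and attempt to maximize it over planar graphs, combining Euler's formula (which constrains the average degree of a planar graph to be less than $6$) with sharp combinatorial identities such as the Matrix-Tree theorem.

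The main obstacle is the extremality of the square lattice itself: this is precisely Kenyon's conjecture, and is, to my knowledge, open. The underlying intuition is that $\mathbb{Z}^2$ optimally balances the planarity constraint against the high edge-connectivity required to support many spanning trees; making this rigorous appears to demand either a new discrete isoperimetric inequality tailored to planar graphs or a sharp network comparison principle between arbitrary planar graphs and the square lattice. Given that Kenyon's conjecture in full generality is out of reach with current techniques, in the body of the paper I would expect the conjecture to be taken as motivation rather than established, and instead turn to proving the bound in the restricted family of rational ($2$-bridge) links and to constructing explicit sequences whose determinant densities approach $v_{oct}$ from below.
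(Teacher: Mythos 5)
The statement you were asked to prove is Conjecture \ref{Determinant density UB}, which the paper itself does not prove: it is stated as an open conjecture, noted to be equivalent to Kenyon's conjecture on the spanning-tree entropy of planar graphs, and used only as motivation for the density results that follow. Your assessment is therefore exactly right, and your reduction to the inequality $\log \tau(G)/|E(G)| \le v_{oct}/(2\pi)$ for connected planar graphs $G$ is precisely the equivalence the paper cites. One minor remark: the step you flag as delicate --- comparing an arbitrary link to the alternating link with the same projection --- is in fact standard, since the spanning-tree expansion of the determinant gives $\det(L) \le \tau(G_D)$ for the Tait graph of any diagram $D$, with equality exactly in the alternating case; the genuinely open part is, as you say, the extremality of the square lattice.
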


In \cite{Champanerkar}, Champanerkar, Kofman and Purcell constructed \emph{geometrically} and \emph{diagrammatically maximal} sequences of links related to the \emph{infinite square weave} (shown in Figure \ref{InfWeave}), that is, sequences whose volume and determinant densities both approach $v_{oct}$. Thus, if the conjecture holds, $v_{oct}$ is an asymptotically sharp upper bound for both densities. Further, they proposed the following simple relationship between the two quantities, which they have verified for all alternating knots of up to 16 crossings.

\begin{figure}[h]
\centering
\includegraphics[scale=0.3]{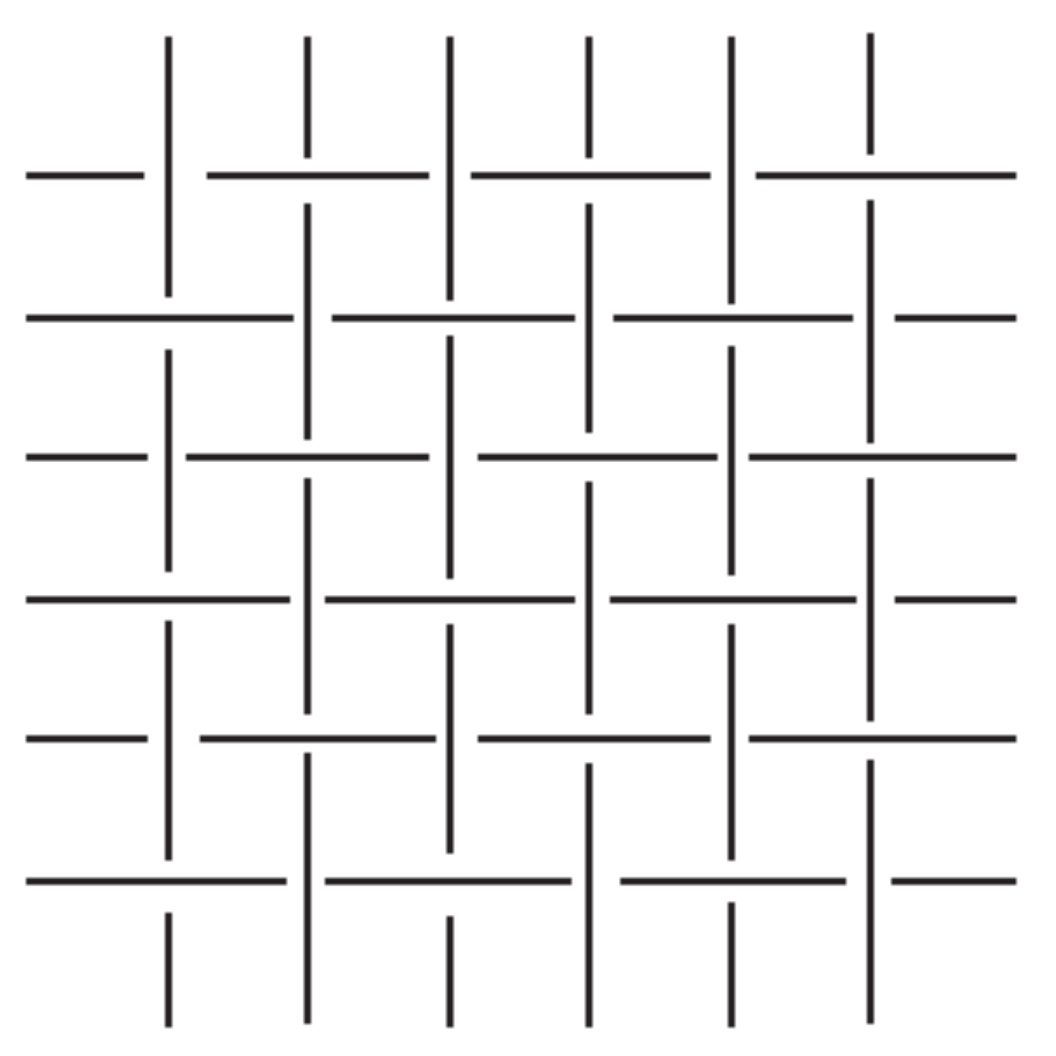}
\caption{The infinite square weave}
\label{InfWeave}
\end{figure}

\begin{conjecture}
\label{Volume-determinant conjecture}
For any alternating hyperbolic link $L$,
\[vol(L) < 2 \pi \log \det(L).\]
\end{conjecture}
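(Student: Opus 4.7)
The plan is to reduce the inequality to a density comparison and then exploit the combinatorial-geometric dictionary available for alternating links. Since $c(L) > 0$, the conjecture is equivalent to $\mathcal{D}_{vol}(L) < \mathcal{D}_{\det}(L)$, and it is natural to first attack the rational case that is the focus of this paper, where both sides admit fully explicit combinatorial descriptions.

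First I would fix $L = L(p/q)$ with minimal continued fraction expansion $[a_1,\ldots,a_n]$, so that $c(L) = \sum_i a_i$ while $\det(L) = p$ is computed by the continuant recursion $p_k = a_k p_{k-1} + p_{k-2}$. Then I would employ Gu\'eritaud's ideal triangulation of $S^3 \setminus L(p/q)$, whose combinatorics are dictated by the $a_i$, to bound $vol(L)$ above by a concrete function of the continued fraction. Combined with the lower bound on $\log p$ extracted from the continuant recursion, the conjecture would reduce to a single numerical inequality in $(a_1,\ldots,a_n)$ that one may try to verify term-by-term, or, failing that, by bundling into blocks.

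The hardest part will be the Fibonacci regime in which most $a_i$ equal $1$. There the continuants $p_k$ grow like powers of the golden ratio $\varphi = (1+\sqrt{5})/2$, so $\log p \approx n \log \varphi$, and each unit of the continued fraction contributes only about $2\pi \log \varphi \approx 3.02$ to the right-hand side. Meanwhile each such unit contributes a Gu\'eritaud tetrahedron of volume at most $v_{tet} \approx 1.01$, so there is room, but only after carefully bundling consecutive $1$'s into maximal blocks and using that the corresponding hyperbolic shape parameters degenerate toward the boundary of the Farey fan. Any cruder bound that allocates $v_{oct}$ per crossing fails in this regime, since $v_{oct} \approx 3.66$ already exceeds $2\pi \log \varphi$.

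Extending the argument from rational to general alternating hyperbolic links would require replacing the continued fraction by an arbitrary Tait graph $G$ and comparing $vol(L)$ directly with $2\pi \log \tau(G)$, where $\tau(G)$ counts spanning trees of $G$. At present this comparison is available only asymptotically, through the Mahler measure computations in \cite{Champanerkar}, which is precisely why the statement remains a conjecture in full generality.
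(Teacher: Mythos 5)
This statement is Conjecture~\ref{Volume-determinant conjecture}, which the paper does not prove: it is an open conjecture of Champanerkar, Kofman and Purcell, recorded here with only the remarks that it has been verified computationally for alternating knots up to 16 crossings and that Stoimenow's inequality $4.355 + 0.219\cdot vol(L) < 2\pi\log\det(L)$ gives partial progress. So there is no proof in the paper to compare against, and the real question is whether your proposal closes the gap. It does not. Your argument is a strategy outline whose decisive step is deferred: you say the rational case ``would reduce to a single numerical inequality in $(a_1,\ldots,a_n)$ that one may try to verify term-by-term, or, failing that, by bundling into blocks,'' but that inequality is never written down, let alone verified, and the bundling in the all-ones (Fibonacci) regime is exactly where the content lies. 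For the general alternating case you concede outright that the comparison with $2\pi\log\tau(G)$ is ``available only asymptotically,'' which is an admission that the conjecture is not proved. A proposal that ends by explaining why the statement remains a conjecture is not a proof of it.

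Two further cautions if you pursue the rational case. First, your volume accounting is off: by the bounds the paper actually uses (Futer's $2v_{tet}\,tw(R) - 2.7066 < vol(R)$ and the bipyramid upper bound $vol(R_n) \le 2v_{tet}(n-2)$), each twist region in the Fibonacci regime contributes roughly $2v_{tet} \approx 2.03$, not $v_{tet} \approx 1.01$; the comparison with $2\pi\log\phi \approx 3.02$ still leaves room, but by less margin than you claim. Second, the conjecture is a strict inequality for every individual link, not a statement about densities in the limit, so any per-entry or per-block comparison must absorb the additive constants (the $-2.7066$, the $\log\sqrt{5}$, the behavior at small $n$ and for twist regions with large $a_i$ where both $\mathcal{D}_{vol}$ and $\mathcal{D}_{\det}$ tend to $0$); these edge cases are precisely where a density-style argument degenerates and must be handled separately.
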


Some progress has been made towards this conjecture in the general case. For instance, Stoimenow showed in \cite{Stoimenow} that for any hyperbolic alternating link $L$, $2 \cdot 1.0355^{vol(L)} \le \det(K)$, i.e.
\[4.355 + 0.219 \cdot vol(L) < 2 \pi \log \det(L).\]

\begin{definition}
\normalfont
Let $\mathcal{C}_{vol}$ and $\mathcal{C}_{det}$ denote the respective sets of volume and determinant densities for hyperbolic links.
\end{definition}

Much recent work has focused on understanding the structure of $\mathcal{C}_{vol}$ and $\mathcal{C}_{\det}$. In particular, Burton \cite{Burton} showed that $\mathcal{C}_{vol}$ and $\mathcal{C}_{\det}$ are dense in the interval $[0,v_{oct}]$. In Section 2, we extend these results to prove that the set of volume and determinant densities of hyperbolic knots (i.e. one-component links) are dense in $[0,v_{oct}]$.  Moreover, we show that for any $x \in [0,v_{oct}]$ there exists a sequence of knots $\{K_n\}$ such that $\mathcal{D}_{vol}(K_n)$ and $\mathcal{D}_{det}(K_n)$ both approach $x$. 

We note in passing that the determinant of alternating knots and links is related to other quantum invariants such as the average of the absolute values of the coefficients of the Jones polynomial, the rank of the reduced Khovanov homology, and the Kashaev invariant. Hence, the density results for determinant densities imply similar density results for these other quantities (see \cite{ChampanerkarSpectra} for more details).

The main purpose of this paper is to study the distributions of volume and determinant densities for the class of hyperbolic rational links. Every rational link is alternating, and can be isotoped to a \emph{standard alternating 2-bridge representation} with one free strand on the right, as shown in Figure \ref{R(4,1,1,2)}. We call the ``vertical chains of bigons" the \emph{twist levels} of the representation. We use the expression $[a_1,a_2,\dots,a_n]$, $a_i > 0$, to denote the rational link with a standard alternating 2-bridge representation with $n$ twist levels, where the $i$th twist level has $a_i$ crossings.

\begin{figure}[h]
\centering
\includegraphics[scale=0.3]{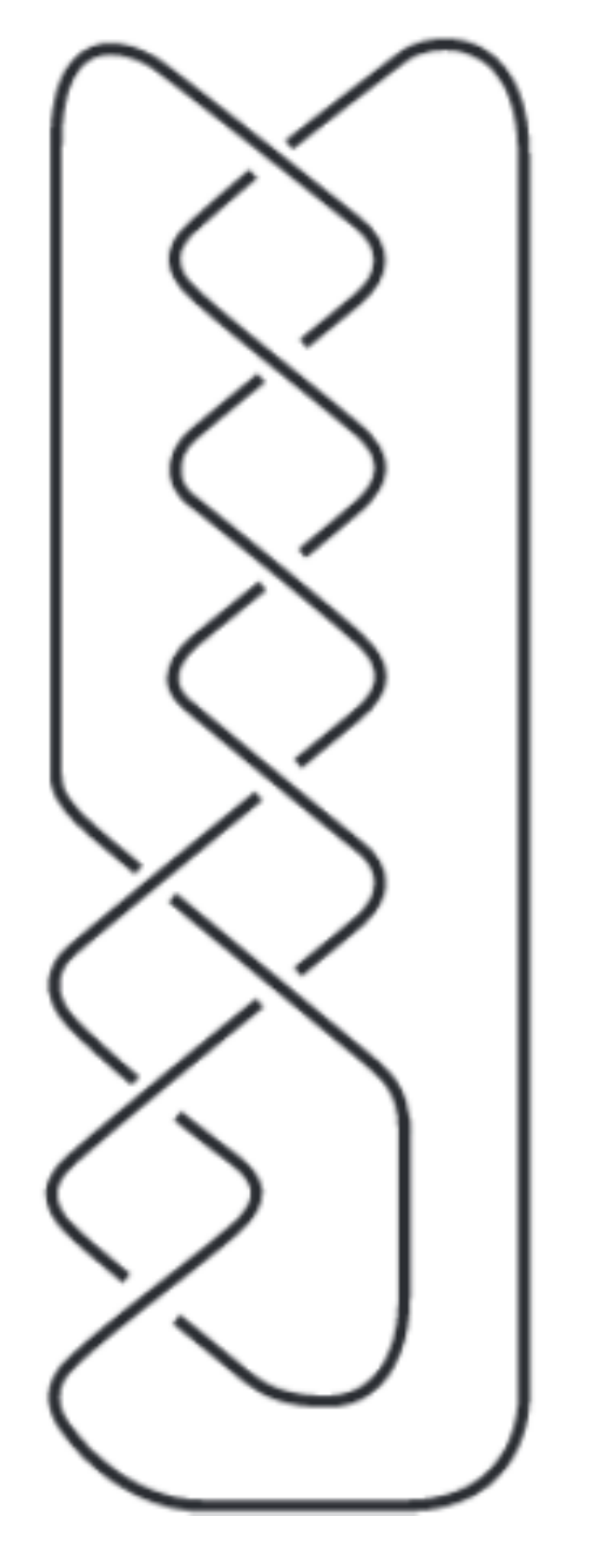}
\caption{The standard alternating 2-bridge representation of $[4,1,1,2]$.}
\label{R(4,1,1,2)}
\end{figure}

\begin{definition}
\normalfont
Let $\mathcal{C}_{vol}^{rat}$ and $\mathcal{C}_{det}^{rat}$ denote the sets of volume and determinant densities, respectively, for hyperbolic rational links.
\end{definition}

We prove the following theorems in Sections 3 and 4:

\newtheorem*{volume dense and bounded}{Theorem \ref{volume dense and bounded}}
\begin{volume dense and bounded}
The set
$\mathcal{C}_{vol}^{rat}$ is a dense subset of $[0, 2 v_{tet}]$.
\end{volume dense and bounded}

\newtheorem*{det dense and bounded}{Theorem \ref{det dense and bounded}}
\begin{det dense and bounded}
The set $\mathcal{C}_{det}^{rat}$ is a dense subset of $[0, 2 \pi \log(\phi)]$, where $\phi = \frac{1+\sqrt{5}}{2}$ is the golden ratio.
\end{det dense and bounded}


Note that the least upper bounds for $\mathcal{C}_{vol}^{rat}$ and $\mathcal{C}_{det}^{rat}$ differ, and are both smaller than $v_{oct}$. In \cite{Champanerkar}, the links used to form geometrically and diagrammatically maximal sequences contain increasingly large patches of the infinite square weave. This cannot be achieved in the case of rational links, so it stands to reason that the corresponding least upper bounds might be smaller. It is an interesting problem to attempt similar results for 3-bridge hyperbolic links and for $k$-bridge hyperbolic links in general. As $k$ increases we can construct links which contain incrementally larger portions of the infinite square weave, and therefore we expect the least upper bounds to increase as well. This motivates the following conjecture, which would provide a characterization of the structure of $\mathcal{C}_{vol}$ and $\mathcal{C}_{det}$.

\begin{conjecture}
Let $\mathcal{C}_{vol}^k$ and $\mathcal{C}_{det}^k$ represent the set of volume and determinant densities of $k$-bridge links, respectively, and let $\beta_{vol}^k$ and $\beta_{det}^k$ denote their respective least upper bounds. Then $\{\beta_{vol}^k\}_{k \in \mathbb{N}}$ and $\{\beta_{det}^k\}_{k \in \mathbb{N}}$ are strictly increasing sequences that both converge to $v_{oct}$.
\end{conjecture}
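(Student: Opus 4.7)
The plan is to split the conjecture into three pieces: (i) weak monotonicity $\beta^k \le \beta^{k+1}$, (ii) convergence $\beta^k \to v_{oct}$, and (iii) strict monotonicity $\beta^k < \beta^{k+1}$. Under the convention that ``$k$-bridge'' means bridge number at most $k$, the first is essentially formal, the second should follow from the maximal sequences of \cite{Champanerkar}, and the third contains the substantive content.

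For (i), a link of bridge number at most $k$ is also of bridge number at most $k+1$, so $\mathcal{C}_{vol}^k \subseteq \mathcal{C}_{vol}^{k+1}$ and $\mathcal{C}_{det}^k \subseteq \mathcal{C}_{det}^{k+1}$, and taking suprema gives the weak inequalities. For (ii), I would take the geometrically and diagrammatically maximal sequence $\{L_n\}$ of \cite{Champanerkar} whose diagrams contain an $n \times n$ patch of the infinite square weave, and study the bridge numbers $b_n := b(L_n)$. If $\{b_n\}$ were bounded, weak monotonicity together with $\mathcal{D}(L_n) \le \beta^{b_n}$ would force $\beta^k = v_{oct}$ for some finite $k$, contradicting the universal strict bound $\mathcal{D}_{vol}(L) < v_{oct}$ (respectively the conjectural bound in Conjecture \ref{Determinant density UB}, which for this purpose can be replaced by the known weaker strict bound). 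Hence $b_n \to \infty$; since $\mathcal{D}(L_n) \le \beta^{b_n} \le v_{oct}$ and $\mathcal{D}(L_n) \to v_{oct}$, the subsequence $\{\beta^{b_n}\}$ converges to $v_{oct}$, and monotonicity propagates this to the full sequence.

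The hard part is (iii). The base case $k = 2$ is handled precisely by Theorems \ref{volume dense and bounded} and \ref{det dense and bounded}, where the bounds $2 v_{tet}$ and $2\pi \log \phi$ are strictly less than $v_{oct}$ and exploit the rigid combinatorics of standard 2-bridge diagrams (bounded twist structure, path-like Tait graphs). For general $k$, the program is first to produce upper bounds $B_k < v_{oct}$ on $\beta^k$ via a refined Thurston octahedral decomposition: in any diagram realizing a $k$-bridge presentation, only a controlled number of crossing octahedra can approach the regular ideal octahedron, the others being forced ``thin'' by the restricted Schubert-type normal form of $k$-bridge links, and a quantitative version of this deficiency should yield the strict $k$-dependent bound. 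Second, one must exhibit a $(k+1)$-bridge family whose densities exceed $B_k$; natural candidates are hybrid links obtained by grafting a single additional weave cell onto a maximally dense $k$-bridge family.

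The principal obstacle is reconciling these two sides at every level: one must show that the structural deficiency of $k$-bridge diagrams is strictly larger than the density gain achievable from a single extra bridge. This level-by-level comparison, rather than the asymptotic statement in (ii), is where I expect the real difficulty to lie, especially because the extremal $k$-bridge configuration is presently unknown even for $k = 3$. A reasonable first benchmark would be to isolate $\beta_{vol}^3$ and $\beta_{det}^3$ by classifying which patches of the square weave can appear inside a 3-bridge diagram; proving the conjecture for $k = 3$ would likely reveal the structural pattern needed to run the induction in full generality.
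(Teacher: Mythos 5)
This statement is a \emph{conjecture} in the paper: the authors offer no proof of it, so there is no argument of theirs to compare yours against --- it is stated precisely to motivate future work. Judged on its own terms, your proposal is a research program rather than a proof, and you say as much; but even the parts you present as essentially complete contain gaps.

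Concretely: in step (ii), the claimed contradiction is not one. If the bridge numbers $b_n$ of the weave closures were bounded, you would conclude $\beta_{vol}^k = v_{oct}$ for some finite $k$; but $\beta_{vol}^k$ is a supremum, and a supremum can equal $v_{oct}$ without being attained, so this does not contradict the pointwise strict inequality $\mathcal{D}_{vol}(L) < v_{oct}$. (It would contradict the strict-increase part of the conjecture itself, but you cannot assume that while proving it, so the assertion $b_n \to \infty$ is left unjustified.) Convergence of $\beta_{vol}^k$ to $v_{oct}$ still follows either way, because $\beta_{vol}^k \le v_{oct}$ holds unconditionally for volume; for the determinant, however, the analogous uniform upper bound is exactly Conjecture \ref{Determinant density UB}, which is open, so without it $\beta_{det}^k$ could a priori exceed $v_{oct}$ and the limit claim fails. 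A further convention issue affects step (i): the paper uses ``$2$-bridge'' to mean bridge number exactly $2$ (rational links), and under the ``exactly $k$'' reading the containment $\mathcal{C}_{vol}^k \subseteq \mathcal{C}_{vol}^{k+1}$ is not automatic, so even weak monotonicity is not formal. Finally, step (iii) --- producing a strict $k$-dependent bound $B_k < v_{oct}$ and a $(k+1)$-bridge family exceeding it --- is the entire substance of the conjecture, and your outline leaves it open, as you acknowledge. The paper's theorems supply only the $k=2$ data points ($2v_{tet}$ and $2\pi\log\phi$); neither the paper nor your proposal yields the inductive step, so the statement remains a conjecture.
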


\section{Density results for hyperbolic knots}


\begin{figure}
    \label{potholder}
    \centering
    \begin{subfigure}[b]{0.3\textwidth}
        \includegraphics[scale = 0.12]{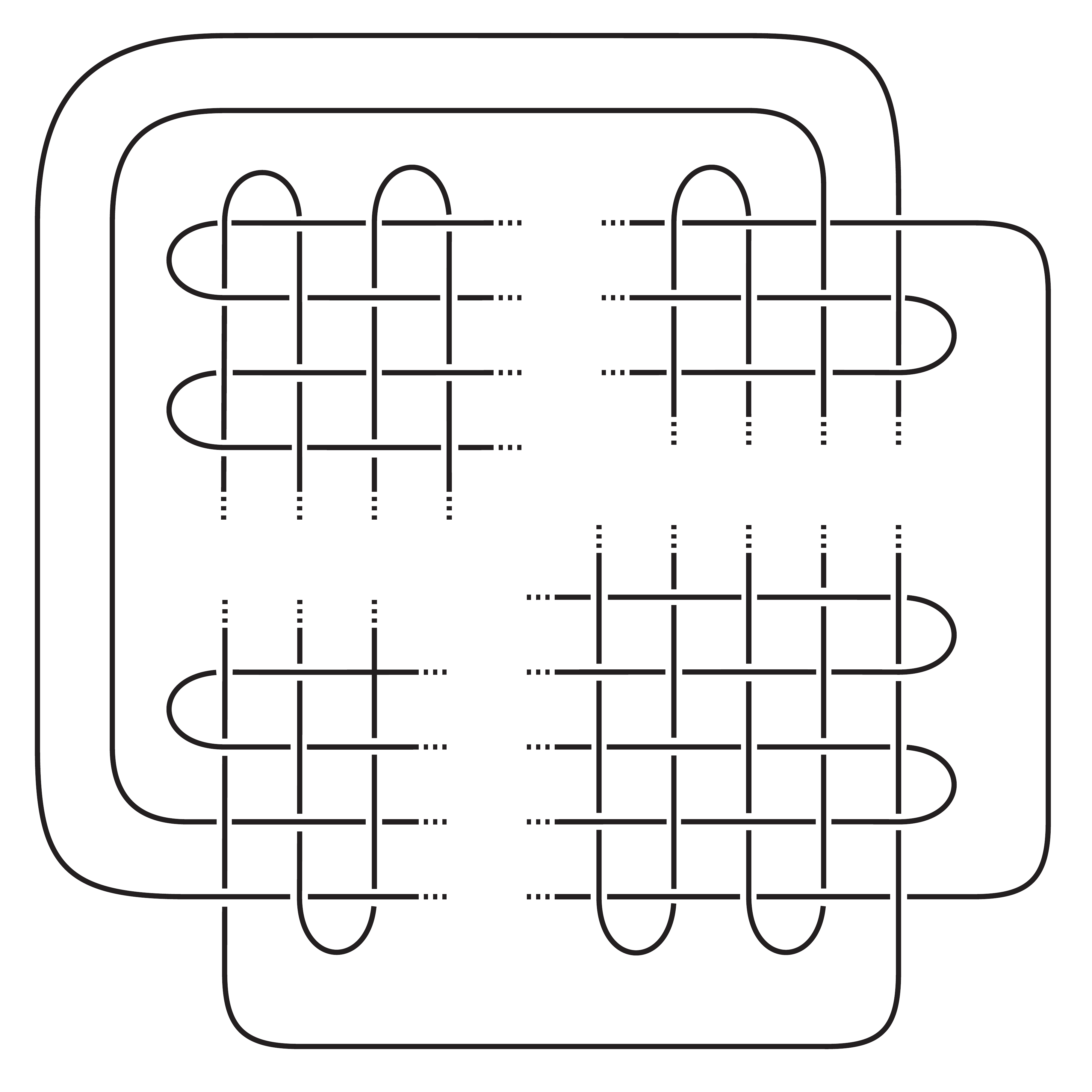}
        \caption{$W_n$ for even $n$}
        \label{Weven}
    \end{subfigure}
    \begin{subfigure}[b]{0.3\textwidth}
        \includegraphics[scale=0.14]{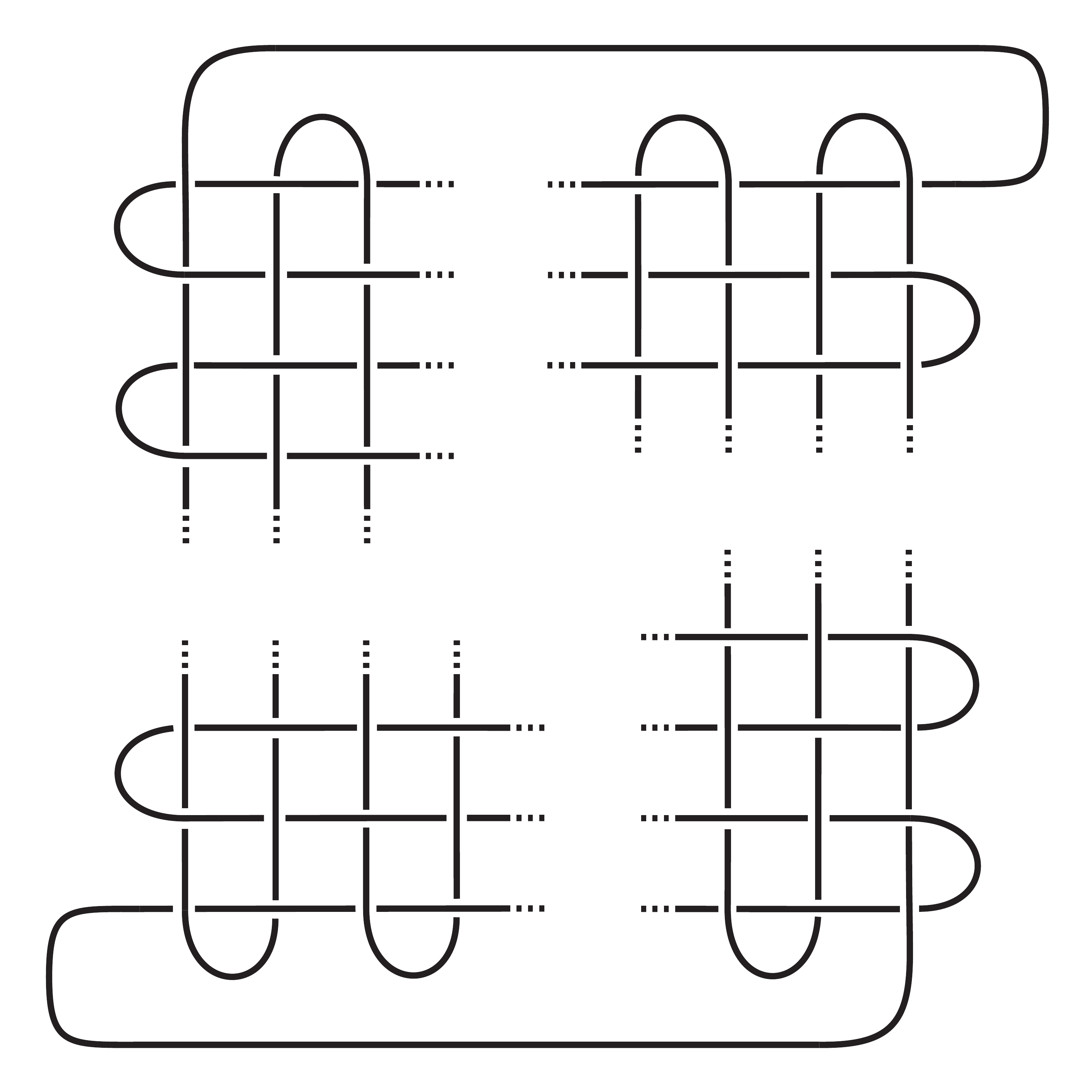}
        \caption{$W_n$ for odd $n$}
        \label{Wodd}
    \end{subfigure}
    \caption{The `modified potholder closure' for $W_n$. Note that for all $n$, $W_n$ is an alternating knot with $n^2$ crossings.}
 \end{figure}

\begin{theorem}
Given $x \in [0,v_{oct}]$, there exists a sequence of hyperbolic knots whose volume and determinant densities both converge to $x$.
\end{theorem}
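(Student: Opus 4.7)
The plan is to build each sequence $\{K_n\}$ by perturbing the modified potholder knots $W_n$ shown in the figure above. The first step is to verify that $W_n$ is itself geometrically and diagrammatically maximal, that is, both $\mathcal{D}_{vol}(W_n)$ and $\mathcal{D}_{det}(W_n)$ converge to $v_{oct}$ as $n \to \infty$. These statements follow from adapting the arguments of \cite{Champanerkar}: the complements of $W_n$ converge geometrically to the infinite square weave complement (giving the volume density limit), while the corresponding Tait graphs converge to the square lattice with Mahler-measure-controlled spanning-tree growth (giving the determinant density limit). This handles the right endpoint $x = v_{oct}$, and the left endpoint $x = 0$ is realized by any family of hyperbolic two-bridge knots with bounded twist number and unbounded crossing number, such as $[n,1,1,1]$, whose volumes and $\log \det$ values remain bounded while their crossing numbers grow.

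For interior $x \in (0, v_{oct})$, I would set $m_n := \lceil n^2(v_{oct}/x - 1) \rceil$ and define $K_n$ to be the alternating knot obtained from $W_n$ by replacing one fixed crossing with a twist region of $m_n + 1$ alternating crossings (choosing the parity of $m_n$ so that $K_n$ is a one-component link, which is possible since $W_n$ is). Then $c(K_n) = n^2 + m_n$ with $c(K_n)/n^2 \to v_{oct}/x$, and for large $n$ the diagram is reduced, prime, and alternating, so $K_n$ is hyperbolic by Menasco. It now suffices to show $|vol(K_n) - vol(W_n)| = o(n^2)$ and $|\log \det(K_n) - \log \det(W_n)| = o(n^2)$, because dividing by $c(K_n) \sim (v_{oct}/x) n^2$ then forces both densities to converge to $x$.

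The determinant estimate is purely combinatorial: enlarging a twist region corresponds to replacing an edge $e$ of the Tait graph $G_{W_n}$ by $m_n + 1$ parallel edges, which gives $\det(K_n) = \det(W_n) + m_n \cdot \tau(G_{W_n}/e)$, where $\tau$ denotes spanning-tree count. Since $\tau(G_{W_n}/e) \le \tau(G_{W_n}) = \det(W_n)$ by deletion-contraction, we get $\det(K_n) \le (m_n + 1)\det(W_n)$, whence $|\log \det(K_n) - \log \det(W_n)| = O(\log n)$. The volume estimate is the main obstacle: both $W_n$ and $K_n$ are Dehn fillings of the link $\hat W_n$ obtained by augmenting the chosen crossing, and Thurston's hyperbolic Dehn surgery theorem gives $|vol(K_n) - vol(\hat W_n)| \to 0$ together with $|vol(W_n) - vol(\hat W_n)|$ bounded, yielding $|vol(K_n) - vol(W_n)| = O(1)$. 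The delicate point is making these surgery estimates uniform as $n$ varies, which requires controlling the cusp shape of $\hat W_n$ at the augmenting circle uniformly in $n$; this follows from the local geometric convergence of the $\hat W_n$ near the augmented crossing, but verifying the uniformity carefully is the main technical content of the proof.
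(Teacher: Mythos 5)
Your overall strategy coincides with the paper's: start from the geometrically and diagrammatically maximal potholder weaves $W_n$, lengthen one twist region by roughly $n^2(v_{oct}/x-1)$ crossings to dilute both densities down to $x$, control the determinant via the multi-edge spanning-tree count in the Tait graph, and control the volume by comparing with the augmented link. Your determinant estimate is complete and is exactly the paper's computation $\det(W_n) \le \det(W_n^m) \le (m+1)\det(W_n)$, and your separate treatment of the endpoint $x=0$ is a sensible addition, since the paper's choice of $m$ degenerates there.

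The gap is the one you flag yourself: the volume comparison. Thurston's hyperbolic Dehn surgery theorem gives $|vol(K_n)-vol(\hat W_n)|\to 0$ only for each fixed $n$ as the filling coefficient grows, and upgrading this to $|vol(K_n)-vol(W_n)| = o(n^2)$ along your diagonal sequence requires the uniform cusp-shape control that you acknowledge but do not supply; as written, the proof is incomplete at its central step. The paper closes this gap by replacing the qualitative surgery theorem with the effective Dehn filling estimate of Futer, Kalfagianni and Purcell: if the filling slope on the belt component of $B(W_n)$ has length $\ell>2\pi$, then
\[
\left(1-\left(\frac{2\pi}{\ell}\right)^2\right)^{3/2} vol(B(W_n)) \le vol(W_n^m) < vol(B(W_n)) \le (n^2+4)\,v_{oct},
\]
where the last inequality is the crude octahedral bound on the augmented diagram (so no comparison of $vol(\hat W_n)$ with $vol(W_n)$ is ever needed on the upper side). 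The multiplicative factor depends only on $\ell$, and $\ell$ grows without bound with $m$ (with a lower bound independent of $n$, coming from universal cusp-area estimates for augmentation circles), so the factor tends to $1$ with no hidden dependence on $n$. Combining with $vol(W_n)\le vol(B(W_n))$ and dividing by $n^2+m$ yields the two-sided squeeze giving $\mathcal{D}_{vol}(W_n^m)\to x$. Substituting this quantitative bound for your appeal to Thurston's theorem is precisely what is needed to make your argument rigorous.
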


\begin{proof}
Let $W_n$ be the `modified potholder closure' of the $n$-by-$n$ weave tangle (Figures \ref{Weven}, \ref{Wodd}), and let $W_n^m$ be the result of  lengthening a twist region on the boundary of the projection of $W_n$ by $m$ crossings, where $m$ is even. It will not matter which twist region we pick. By this construction, $W_n$ is an alternating knot with $n^2$ crossings and $W_n^m$ is an alternating knot with $n^2 +m$ crossings. By \cite{Menasco}, $W_n$ and $W_n^m$ are hyperbolic, because they have alternating non-two braid prime diagrams. In \cite{Champanerkar} the sequence $\{W_n\}$ was shown to be geometrically and diagrammatically maximal:
\[\lim_{n \to \infty}\mathcal{D}_{vol}(W_n)=\lim_{n \to \infty}\mathcal{D}_{det}(W_n)=v_{oct}.\]

We first consider the behavior of $\mathcal{D}_{vol}(W_n^m)$ as $m$ and $n$ become large. Let $B(W_n)$ denote the link obtained by replacing the ``additional crossings'' in the twist region of $W_n^m$ with a belting component.  This means that $m$ of the crossings in the twist region are undone, and a trivial component is added around what used to be the twist sequence (similar to Figure \ref{R4B}). Since $\mathcal{D}_{vol}$ is bounded above by $v_{oct}$,  and $B(W_n)$ has $n^2 + 4$ crossings, $n^2$ coming from $W_n$ and 4 coming from the belt, we have that $vol(B(W_n))\leq (n^2+4)v_{oct}$. We also know that $(1,k)$-Dehn filling decreases volume \cite{ThurstonNotes}, and therefore $vol(W_n^m)<vol(B(W_n))$. Finally, a result of Futer, Kalfagianni, and Purcell [Futer2006] provides a bound for the amount the volume can drop due to Dehn filling along a slope $s$, where $\ell$ is the length of $s$, and $\ell > 2 \pi$:
\[\left( 1 - \left( \frac{2\pi}{\ell} \right)^2 \right) ^{3/2} vol(B(W_n)) \leq vol(W_n^m).\]
Note that $\ell$ increases without bound as $m$ does. Combining these results,
\[\left(1 - \left( \frac{2\pi}{\ell} \right)^2 \right) ^{3/2} vol(W_n) < \left( 1 - \left( \frac{2\pi}{\ell} \right)^2 \right) ^{3/2} vol(B(W_n)) \leq vol(W_n^m) < vol(B(W_n)) \leq (n^2+4) v_{oct}.\]
Dividing by $n^2+m$ and taking the limit as $m$ and $n$ (and therefore $\ell$) approach infinity yields
\[\lim_{m,n \to \infty}\mathcal{D}_{vol}(W_n^m) =\frac{n^2 v_{oct}}{n^2+m}\]
By choosing 
\[ m=2\left \lfloor \frac{1}{2}\left( \frac{n^2 v_{oct}}{x} - n^2 \right) \right \rfloor\]
and letting $n$ increase, we obtain a sequence of hyperbolic knots whose volume densities approach any $x \in [0, v_{oct}]$.

Let us now consider the behavior of $\mathcal{D}_{det}(W_n^m)$. The sequence $\{W_n\}$ is both geometrically and diagrammatically maximal, and so for sufficiently large $n$, $\mathcal{D}_{vol}(W_n)$ and $\mathcal{D}_{det}(W_n)$ become  arbitrarily close. Since $W_n^m$ is alternating, $\det (W_n^m)$ is the number of spanning trees of either Tait graph of $W_n^m$. Choose the Tait graph of $W_n^m$ such that the $m+1$ crossings in a single twist region correspond to $m+1$ multi-edges in place of the original edge $e$ in the Tait graph of $W_n$. Then the number of spanning trees $T(W_n)$ is
\[\det(W_n)=T(W_n)= T_e(W_n) +T_{\bar{e}}(W_n),\]
where $T_e(W_n)$ denotes the number of spanning trees of $T(W_n)$ which contain $e$ and $T_{\bar{e}}(W_n)$ denotes the number of spanning trees which do not contain $e$. Once these crossings are added, there are exactly $m+1$ distinct spanning trees for the Tait graph of $W_n^m$ for every spanning tree of the Tait graph of $W_n$ that contains $e$, one for each of the multi-edges. The spanning trees that do not contain $e$ remain unchanged. This yields
\[\det(W_n^m) = T(W_n^m)= (m+1)T_e(W_n) +T_{\bar{e}}(W_n)\]
and therefore
\[\det(W_n) \leq \det(W_n^m) \leq (m+1) \det(W_n).\]
This gives
\[\mathcal{D}_{det}(W_n)\frac{n^2}{n^2+m} \leq \mathcal{D}_{det}(W_n^m) \leq \mathcal{D}_{det}(W_n)\frac{n^2}{n^2+m} +\frac{2\pi\log (m+1)}{n^2+m}.\]
As $m$ and $n$ become large this last term goes to $0$ and $\mathcal{D}_{det}(W_n)$ approaches $v_{oct}$. Therefore 
\[\lim_{m,n \to \infty}\mathcal{D}_{det}(W_n^m) =\frac{n^2 v_{oct}}{n^2+m}\]
as well. Using the same choices of $m$ and $n$ (and therefore the same sequence of knots) as in the above proof for volume density yields
\[\lim_{m,n \to \infty}\mathcal{D}_{det}(W_n^m)=x.\]
\end{proof}

\section{Volume densities of rational links}

In this section, we prove that $\mathcal{C}_{vol}^{rat}$ is a dense subset of $[0,2 v_{tet}]$. Most of the proofs refer to a specific sequence of rational links with volume densities approaching $2v_{tet}$, which we define below.

\begin{definition}
\normalfont
Let $R_n$ be the rational link with the alternating standard 2-bridge representation that has exactly one crossing in each of its $n$ twist levels (Figure \ref{R4}). Let $R_n^m$ be the result of adding $m$ crossings to the last twist level of $R_n$ (Figure \ref{R45}), $m$ an even number. Finally, let $B(R_n)$ denote $R_n$ with a belt about its last twist level (Figure \ref{R4B}).
\end{definition}

\begin{figure}
\label{R4variants}
    \centering
    \begin{subfigure}[b]{0.3\textwidth}
        \includegraphics[scale = 0.32]{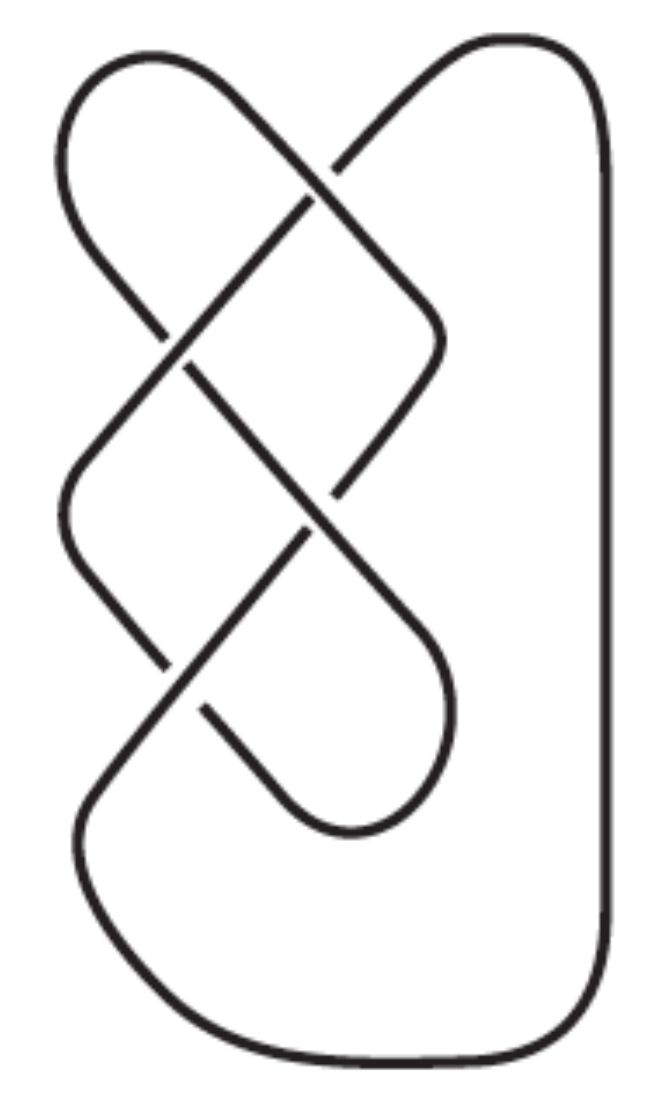}
        \caption{$R_4$}
        \label{R4}
    \end{subfigure}
    \begin{subfigure}[b]{0.3\textwidth}
        \includegraphics[scale=0.25]{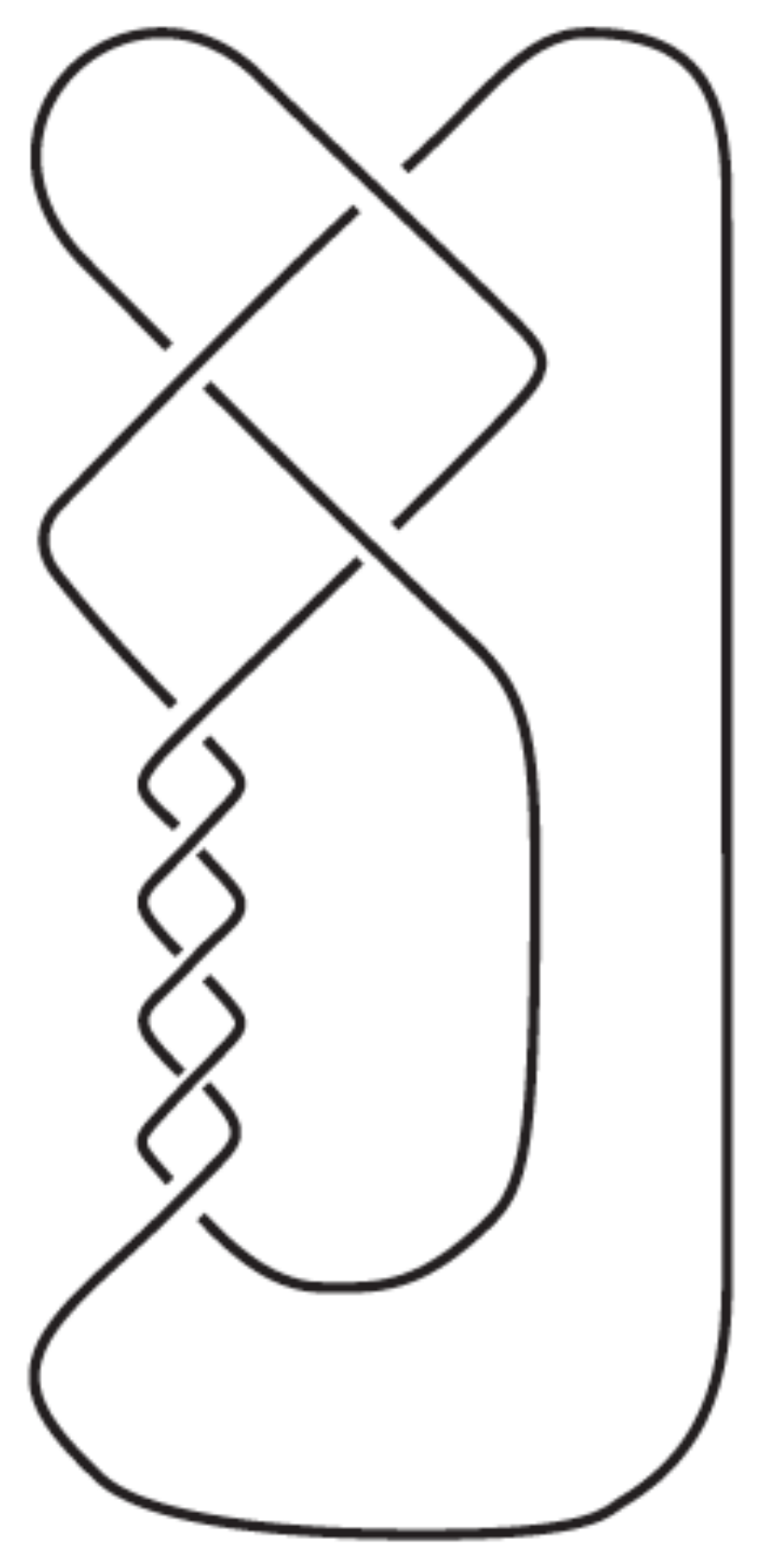}
        \caption{$R_4^5$}
        \label{R45}
    \end{subfigure}
        \begin{subfigure}[b]{0.3\textwidth}
        \includegraphics[scale = 0.5]{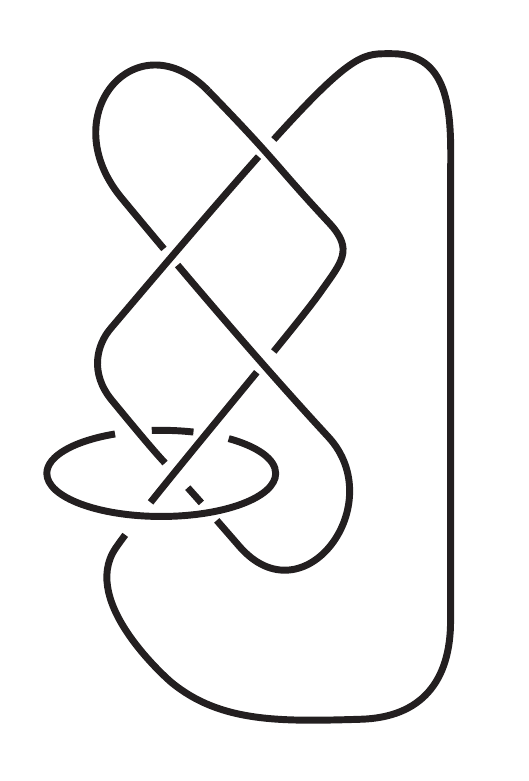}
        \caption{$B(R_4)$}
        \label{R4B}
    \end{subfigure}
    \caption{Variants on $R_4$.} 
\end{figure}

In order to prove density results specifically for rational knots, we will make use of the following proposition.

\begin{lemma}
\label{whenknot}
If $n \equiv 0,1\ (\textrm{mod}\ 3)$ and $m$ is even, then $R_n^m$ is a knot. 
\end{lemma}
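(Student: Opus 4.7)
The plan is to exploit the classical Schubert classification of two-bridge links: the rational link $N(p/q)$ obtained from the continued fraction $[a_1,\ldots,a_n]$ is a knot exactly when the numerator $p$ of that fraction (in lowest terms) is odd, and is a two-component link when $p$ is even. So the problem reduces to computing the numerator of the continued fraction for $R_n^m = [1,1,\ldots,1,m+1]$ (with $n-1$ leading $1$'s) and checking when it is odd.

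First I would evaluate the continued fraction for $R_n$ itself. Writing $R_n = [1,1,\ldots,1]$ as a fraction via the usual recursion $\alpha_k = 1 + 1/\alpha_{k-1}$ with $\alpha_1 = 1$, one gets by induction
\[
R_n \;=\; \frac{F_{n+1}}{F_n},
\]
where $F_k$ denotes the $k$-th Fibonacci number ($F_1 = F_2 = 1$). To handle $R_n^m$, I would replace the final $1$ in this continued fraction by $m+1$. Using $R_n^m = (m+1) + 1/\alpha_{n-1} = (m+1) + F_{n-1}/F_n$, a one-line computation gives
\[
R_n^m \;=\; \frac{(m+1)F_n + F_{n-1}}{F_n}.
\]
One should check that this fraction is already in lowest terms, which follows from $\gcd(F_n,F_{n-1})=1$ and the fact that any common factor of the numerator and $F_n$ would divide $F_{n-1}$; this is a short step.

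Next I would analyze parity. Since $m$ is even, $m+1$ is odd, so the numerator $p = (m+1)F_n + F_{n-1}$ is congruent mod $2$ to $F_n + F_{n-1} = F_{n+1}$. Thus $R_n^m$ is a knot precisely when $F_{n+1}$ is odd. The Fibonacci sequence modulo $2$ is periodic with period $3$ (namely $1,1,0,1,1,0,\ldots$), so $F_k$ is even if and only if $3 \mid k$. Therefore $F_{n+1}$ is odd iff $n+1 \not\equiv 0 \pmod 3$, i.e.\ iff $n \equiv 0$ or $1 \pmod 3$, which is exactly the hypothesis of the lemma.

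I do not anticipate a serious obstacle: the only subtlety is ensuring the convention for reading off the continued fraction of a $2$-bridge link matches the one used to define $R_n^m$ in the paper, and verifying this is consistent with known small cases (e.g.\ $R_3 = [1,1,1]$ corresponds to $3/2$, the trefoil, a knot, in agreement with $n=3 \equiv 0 \pmod 3$). Once the continued-fraction convention is pinned down, the proof is purely a short Fibonacci parity computation.
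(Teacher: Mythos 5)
Your proof is correct, but it takes a genuinely different route from the paper. The paper argues purely diagrammatically: it observes that in the standard representation of $R_n$ every block of $6$ crossings returns the strands to their original positions, reduces to a finite check of $n$ modulo $6$, and then notes that keeping the last twist level at an odd number of crossings (which is what adding an even $m$ does) cannot change the number of components. You instead invoke Schubert's classification of two-bridge links --- $N(p/q)$ is a knot iff $p$ is odd --- compute the continued fraction $[1,\ldots,1,m+1]$ to get $p = (m+1)F_n + F_{n-1}$, and reduce to the parity of $F_{n+1}$, which is even exactly when $3 \mid (n+1)$. Both arguments are sound; your numerator formula is moreover consistent with the determinant formula $\det(R_n^m) = (m+1)F_n + F_{n-1}$ that the paper uses later in Lemma \ref{Determinants dense}, which is a good sanity check since $\det$ of a two-bridge link equals $p$. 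What each approach buys: the paper's check is elementary and self-contained, requiring no classification theorem, but its ``checking each of the $6$ states'' is an unwritten finite verification; your argument leans on a standard external result but replaces the case check with a transparent Fibonacci parity computation and, as you note, only requires pinning down the continued-fraction convention against a known small case such as the trefoil $[1,1,1] = 3/2$.
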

\begin{proof}
Observe that in $R_n$, every set of $6$ crossings returns the three strands to their original positions. Hence, we only need to consider $n$ modulo $6$. Checking each of the $6$ states, only $n \equiv 0,1\ (\textrm{mod}\ 3)$ yield knots. Moreover, any odd number of crossings in the last twist level preserves the number of components. 
\end{proof}

\begin{lemma}
\label{maxseq}
$R_n$ is hyperbolic if $n \ge 4$, and
\[\lim_{n \to \infty} \mathcal{D}_{vol}(R_n) = 2v_{tet}.\]
\end{lemma}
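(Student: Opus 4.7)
The lemma has two parts: hyperbolicity of $R_n$ for $n \geq 4$ and the asymptotic formula for the volume density.

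For hyperbolicity, the plan is to apply Menasco's theorem (\cite{Menasco}) exactly as it was used for $W_n$ and $W_n^m$ in Section 2. The standard alternating 2-bridge diagram of $R_n = [1,1,\ldots,1]$ is alternating, reduced, and prime, so it suffices to check that it is not a standard diagram of a $(2,q)$ torus link. This is immediate because its continued fraction evaluates to the Fibonacci ratio $F_{n+1}/F_n$, which for $n \geq 3$ is never of the form $1/q$. Hence $S^3 \setminus R_n$ is hyperbolic for $n \geq 4$.

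For the volume density limit, my plan is to use the canonical Sakuma--Weeks--Gu\'{e}ritaud ideal triangulation of the 2-bridge link complement. In this decomposition, the ideal tetrahedra are arranged in layers indexed by the continued-fraction entries, and their shape parameters satisfy a combinatorial recurrence along the CF. For $R_n = [1,1,\ldots,1]$, this recurrence is driven by the Fibonacci/golden-ratio dynamics of the all-ones CF, and the shape parameters converge to those of the regular ideal tetrahedron (edge parameter $e^{i\pi/3}$, volume $v_{tet}$) as $n \to \infty$. Summing tetrahedron volumes gives $vol(R_n) = 2n\, v_{tet} + o(n)$, and dividing by $c(R_n) = n$ produces the stated limit.

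A second, more hands-on strategy, better aligned with the subsequent definitions of $R_n^m$ and $B(R_n)$ used in the rest of this section, goes through the belted link. The idea is to exhibit an explicit ideal polyhedral decomposition of $S^3 \setminus B(R_n)$ whose total volume is asymptotic to $2n\, v_{tet}$, then use $vol(R_n) \leq vol(B(R_n))$ (since $R_n$ is a Dehn filling of $B(R_n)$) together with the Futer--Kalfagianni--Purcell inequality [Futer2006] applied to the belt to pinch $vol(R_n)$ between two quantities, both of which are asymptotic to $2n\, v_{tet}$.

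The main obstacle in either approach is the asymptotic shape calculation: one must show that the relevant ideal tetrahedra (in the canonical triangulation of $S^3 \setminus R_n$, or in the decomposition of $B(R_n)$) converge uniformly across layers to the regular ideal tetrahedron, with quantitative control on the error. This is the essential geometric content of the lemma, and the Fibonacci/golden-ratio structure of $[1,1,\ldots,1]$ is what singles out the constant $2v_{tet}$ for this sequence; once the shape convergence is established, the crossing-number division and the limit $n \to \infty$ are routine.
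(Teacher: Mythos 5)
Your treatment of hyperbolicity is fine and essentially matches the paper's: both invoke Menasco's results for connected, prime, reduced alternating, non-2-braid diagrams, and the standard 2-bridge diagram of $R_n$ for $n \geq 4$ qualifies.

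The volume asymptotics, however, are not proved. Both of your strategies reduce the lemma to the claim that the ideal tetrahedra in some decomposition of $S^3 \setminus R_n$ (or of $S^3 \setminus B(R_n)$) converge to regular ideal tetrahedra with total volume $2n v_{tet} + o(n)$, and you explicitly defer that claim, calling it ``the essential geometric content of the lemma.'' That is exactly the step a proof must supply, and it is not routine: it would require analyzing Gu\'{e}ritaud's shape-parameter equations along the all-ones Farey path, or a geometric limit argument with quantitative error control. The paper avoids this entirely by sandwiching $vol(R_n)$ between two off-the-shelf bounds, each of the form $2 n v_{tet} + O(1)$: Futer's lower bound for hyperbolic two-bridge links, $2v_{tet}\, tw(R) - 2.7066 < vol(R)$ (Theorem B.3 in the appendix of \cite{Gueritaud}), applied to the single-crossing twist regions of $R_n$, and the face-centered-bipyramid upper bound $vol(R_n) \leq 2v_{tet}(n-2)$ from \cite{Adams2015}, obtained by collapsing the two largest faces. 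Dividing by $c(R_n) = n$ and letting $n \to \infty$ then gives the limit with no shape analysis at all. Separately, your second strategy has an internal problem: the Futer--Kalfagianni--Purcell inequality controls the volume drop only for Dehn fillings along slopes of length greater than $2\pi$, and the filling of the belt component of $B(R_n)$ that recovers $R_n$ itself (as opposed to $R_n^m$ for large $m$) is a short slope, so that inequality yields no lower bound on $vol(R_n)$ in terms of $vol(B(R_n))$; and even if it did, you would still need to compute $vol(B(R_n))$ asymptotically, which is not done.
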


\begin{proof}
If $n \ge 4$, then the standard 2-bridge representation of $R_n$ is reduced alternating, connected, with no circles intersecting the projection in two points with crossings on both sides and not obviously a 2-braid. Theorems of Menasco imply that $R_n$ is in fact prime, non-split and not 2-braid \cite{Menasco198437}. Hence, $R_n$ is hyperbolic when $n \ge 4$.

To obtain a lower bound for $vol(R_n)$, we use the following inequality due to Futer \cite[Theorem B.3]{Gueritaud}:
\[ 2v_{tet} tw(R)-2.7066<vol(R),\]
where $R$ is any hyperbolic rational link and $tw(R)$ is its number of twist regions.
Following the methods of \cite{Adams2015}, an upper bound for $vol(R_n)$ can be found by using face-centered bipyramids and collapsing the largest two faces in the projection. This yields
\[vol(R_n)\leq 2v_{tet}(n-2).\]
For all $n \ge 4$, the crossing number is $c(R_n)=n$, and therefore
\[ \frac{2(n-2)v_{tet}-2.7066}{n}< \mathcal{D}_{vol}(R_n) \leq \frac{2(n-2)v_{tet}}{n}.\]
Thus
\[ \lim_{n \to \infty} \mathcal{D}_{vol}(R_n) =2v_{tet}.\]

Restricting to the links $R_n$ that satisfy Lemma \ref{whenknot} yields a sequence of hyperbolic rational knots whose volume densities converge to $2 v_{tet}$ from below.
\end{proof}

\begin{lemma}
\label{volume bounded}
For all rational links $L$, $\mathcal{D}_{vol}(L) < 2v_{tet}$.
\end{lemma}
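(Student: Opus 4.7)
The plan is to promote the volume estimate from Lemma \ref{maxseq} to a bound that holds uniformly over all hyperbolic rational links, and then use the resulting ``$-2$'' correction to force a strict inequality. Concretely, the target inequality is
\[
vol(L)\ \le\ 2v_{tet}\bigl(c(L)-2\bigr),
\]
from which the lemma follows immediately: dividing by $c(L)$ gives $\mathcal{D}_{vol}(L)\le 2v_{tet}\bigl(1-2/c(L)\bigr)<2v_{tet}$, since every hyperbolic rational link has $c(L)$ finite (and in fact $c(L)\ge 4$, as $[q]$ is a $(2,q)$-torus link and $[1,1]$ is the Hopf link).

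To prove the estimate, take $L=[a_1,\ldots,a_n]$ with its standard alternating 2-bridge diagram and apply the face-centered bipyramid decomposition of \cite{Adams2015}, exactly as in Lemma \ref{maxseq}. Cone each face of the diagram to two ideal apices placed above and below the projection plane; the resulting ideal bipyramids glue together to recover $S^3\setminus L$, so a straightening argument gives
\[
vol(L)\ \le\ \sum_{F} V(F),
\]
where $V(F)$ denotes the maximum volume of an ideal bipyramid whose equator has the combinatorial type of the face $F$. A decisive simplification is that an ideal bigon-bipyramid is degenerate: the $\sum_i(a_i-1)=c(L)-n$ bigon faces inside the twist regions all drop out of the sum. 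The remaining non-bigon faces are the two ``polar'' regions of the 2-bridge diagram together with the $n$ non-bigon ``interface'' faces between consecutive twist levels. I would then collapse the two largest polar faces, just as in the $R_n$ case, and estimate the remaining bipyramid volumes to conclude a total contribution of at most $2v_{tet}(c(L)-2)$.

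The main technical obstacle is this last step. For $R_n$ every interface face is a small polygon (in fact a triangle) and the contributions add cleanly to $2v_{tet}(n-2)$, but for general $[a_1,\ldots,a_n]$ some interface faces can grow when the $a_i$ are unbalanced, so one must keep careful track of how the maximum $k$-bipyramid volume scales with face size and verify that the per-crossing contribution of each non-polar face remains bounded by $2v_{tet}$ after the two polar collapses. Once this geometric estimate is in place, the strict inequality $\mathcal{D}_{vol}(L)<2v_{tet}$ is automatic.
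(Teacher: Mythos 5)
Your setup coincides with the paper's: the face-centered bipyramid upper bound of \cite{Adams2015}, the observation that bigon bipyramids are degenerate so each twist level contributes only the single non-bigon face adjacent to it, and the collapse of two large faces of the $2$-bridge diagram. But the proposal stops exactly where the content of the lemma begins. The entire difficulty is the estimate you defer to as ``the main technical obstacle'': showing that the non-bigon face associated to the $i$th twist level, which has $e_i = a_i+2$ edges (or $a_i+1$ at the ends), satisfies $vol(B_{e_i}) \le 2v_{tet}\,a_i$. This is not automatic --- a priori a face with many edges could contribute a large bipyramid --- and the paper proves it by combining the bound $vol(B_k) < 2\pi\log(k)$ with the monotonicity of $\{vol(B_k)\}$, a derivative comparison showing $2\pi\log(a+2)$ grows more slowly than $2av_{tet}$ for $a \ge 8$ (plus an explicit check at $a=7$), and an inspection of the finitely many bipyramid volumes for $a \le 6$. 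Without some version of this quantitative computation your argument establishes nothing.

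There is also a subtlety your sketch glosses over that the paper must handle as a separate case: the per-face inequality is not strict, since $vol(B_3) = 2v_{tet}$ means an interior twist level with $a_i = 1$ contributes exactly $2v_{tet}a_i$ --- this is precisely why $\mathcal{D}_{vol}(R_n) \to 2v_{tet}$. So strictness cannot be obtained face by face; the paper derives it from the end twist levels, whose faces have only $a_i+1$ edges and hence strictly smaller bipyramids. Your proposed route of proving $vol(L) \le 2v_{tet}(c(L)-2)$ outright would need the analogous end-level estimate $vol(B_{a+1}) \le 2v_{tet}(a-1)$, which again holds with equality at $a=2$ and is nowhere verified for larger $a$ in your sketch. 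Either way, the bipyramid-volume comparison \emph{is} the proof, and it is missing.
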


\begin{proof}
Suppose that $L$ is a rational link with $\mathcal{D}_{vol}(L) \ge 2v_{tet}$, and suppose that it has a standard 2-bridge projection with $n$ twist levels. In the face-centered bipyramid upper bound on volume appearing in \cite{Adams2015},the bipyramids corresponding to the exterior face and the face that shares with it the vertical strand with no crossings can be collapsed.  Each remaining face $F_i$ of the projection is associated with exactly one twist level. Therefore, if $e_i$ is the number of edges of the face $F_i$ and $a_i$ is the number of crossings in the $i^{th}$ twist level, then
\[ e_i=\begin{cases} 
      a_i+1 & i=\{1,n\} \\
      a_i+2 & i \in \{2, \ldots, n-1\}
      \end{cases}
\]
Since the face-centered bipyramid method gives an upper bound $\beta(L)$ on the volume of $L$, if $\mathcal{D}_{vol}(L) \ge 2v_{tet}$ then $\frac{\beta(L)}{c(L)} \ge 2v_{tet}$ as well. However, if a regular ideal $k$-sided bipyramid $B_k$ is associated with each face of $k$ edges, then each of the $n$ twist levels contribute $vol(B_{e_i})$ to $\beta(L)$. This gives
\[\sum_{i=1}^n vol(B_{e_i}) \ge 2 v_{tet} \sum_{i=1}^n a_i.\]
Therefore either there exists an $i$ such that $\frac{vol(B_{e_i})}{a_i} > 2v_{tet}$, or it is the case that for all $i$, $ \frac{vol(B_{e_i})}{a_i} = 2 v_{tet}$. 

Consider the first case. By the fact that the sequence $\{vol(B_k)\}$ is strictly increasing \cite{Adams2015}, we need only consider $i$ such that 
\[ \frac{vol(B_{a_i+2})}{a_i} > 2v_{tet}.\]
It was proved in \cite{Adams2015} that for $n\geq 2$
\[vol(B_n)< 2 \pi \text{log}(n).\]
Note that for $a_i=7$, $vol(B_{7+2})<2\pi \text{log}(7 + 2)<14v_{tet}$, and that for all $a_i\geq 8$, 
\[ \frac{d}{da_i} 2\pi\text{log}(a_i+2)< \frac{d}{da_i} 2a_iv_{tet}.\] 
This implies that for such an $i$, $a_i\leq 6$. However the bipyramid volumes (which appear in \cite{Adams2015}) for these remaining cases are too small, so such an $a_i$ cannot exist.

\bigskip

In the second case, for the first twist region, we have that
\[\frac{vol(B_{a_1+1})}{a_1} = 2 v_{tet}.\]
But the strictly increasing nature of $\{vol(B_k)\}$ together with the result of case one yields
\[vol(B_{a_1+1}) < vol(B_{a_1+2}) \le 2 v_{tet} a_1,\]
which also yields a contradiction.

\end{proof}

\begin{lemma}
\label{Volumes dense}
The set of volume densities of rational knots is dense over the interval $[0,2v_{tet}]$.
\end{lemma}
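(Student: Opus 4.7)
The plan is to fix $x \in (0, 2v_{tet})$ and a large $n$ with $n \equiv 0, 1 \pmod{3}$ (so that Lemma \ref{whenknot} guarantees $R_n^m$ is a knot for every even $m \geq 0$), then vary $m$ to sweep through densities between $\mathcal{D}_{vol}(R_n)$ (when $m = 0$) and $0$ (as $m \to \infty$). By Lemma \ref{maxseq}, one can arrange $\mathcal{D}_{vol}(R_n) > x$ by taking $n$ sufficiently large; hyperbolicity of each $R_n^m$ with $n \geq 4$ follows from the same Menasco-style criteria used in Lemma \ref{maxseq}, since the standard 2-bridge projection of $R_n^m$ is still reduced alternating, prime, and non-two-braid.

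Exactly as in the proof of Theorem 2.1, I would view $R_n^m$ as a $(1,k)$-Dehn filling on the belted link $B(R_n)$, whose volume $V_n := vol(B(R_n))$ is independent of $m$. Thurston's inequality gives $vol(R_n^m) < V_n$, and the Futer--Kalfagianni--Purcell estimate forces $vol(R_n^m) \to V_n$ as $m \to \infty$. Since $c(R_n^m) = n + m$, these together imply $\mathcal{D}_{vol}(R_n^m) \to 0$ as $m \to \infty$ for fixed $n$, while $\mathcal{D}_{vol}(R_n^0) = \mathcal{D}_{vol}(R_n) > x$.

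The key step is then a discrete intermediate-value argument. I would show that the consecutive density gaps $|\mathcal{D}_{vol}(R_n^{m+2}) - \mathcal{D}_{vol}(R_n^m)|$ tend to zero as $m \to \infty$ via the estimate
\[
\bigl|\mathcal{D}_{vol}(R_n^{m+2}) - \mathcal{D}_{vol}(R_n^m)\bigr| \leq \frac{|vol(R_n^{m+2}) - vol(R_n^m)|}{n+m+2} + \frac{2 V_n}{(n+m)(n+m+2)},
\]
obtained by clearing denominators. Both terms shrink to $0$: the second is immediate, and the first follows because the Futer--Kalfagianni--Purcell bound pins both $vol(R_n^m)$ and $vol(R_n^{m+2})$ within $o(1)$ of $V_n$. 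Since $\{\mathcal{D}_{vol}(R_n^m)\}_{m \text{ even}}$ starts above $x$, decays to $0$, and has vanishing increments, some $R_n^m$ achieves density within $\epsilon$ of $x$. The endpoint $x = 2v_{tet}$ is already in Lemma \ref{maxseq} (applied along $n \equiv 0, 1 \pmod{3}$), and $x = 0$ comes from fixing $n$ and letting $m \to \infty$.

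The main obstacle is this final discrete continuity argument: one must rule out that the sequence leapfrogs over $x$ by more than $\epsilon$ at every step. The Futer--Kalfagianni--Purcell bound does essentially all of the work here, since it both caps the volume by the constant $V_n$ and squeezes successive volumes together. Once that bound is in hand, the remainder is bookkeeping.
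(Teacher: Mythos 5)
Your overall strategy---sweep $m$ for fixed large $n$ and run a discrete intermediate-value argument---is a reasonable reorganization, but the justification of the key gap estimate does not hold up, and this is a genuine gap. The Futer--Kalfagianni--Purcell inequality only applies once the filling slope has length $\ell > 2\pi$, i.e.\ only for $m$ beyond some threshold $M_1(n)$ that you do not control, and even past that threshold it pins $vol(R_n^m)$ to within roughly $\tfrac{3}{2}(2\pi/\ell)^2\,V_n$ of $V_n$, which is $o(1)$ only as $\ell \to \infty$. So your bound on $\bigl|\mathcal{D}_{vol}(R_n^{m+2}) - \mathcal{D}_{vol}(R_n^m)\bigr|$ says nothing about the initial range of $m$. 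Since $\mathcal{D}_{vol}(R_n^m) \to 0$, the sequence could, for all you have shown, plunge below $x - \epsilon$ somewhere in that uncontrolled range and never return; the crossing of $x$ would then occur exactly where your step-size estimate is silent, and the intermediate-value argument yields nothing. What is missing is a \emph{lower} bound on $vol(R_n^m)$ that is uniform in $m$.

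The paper supplies precisely this and thereby avoids the intermediate-value argument altogether: Futer's twist-region bound $2v_{tet}\,tw(R) - 2.7066 < vol(R)$ (via $vol(R_n) < vol(R_n^m)$ for large $m$) together with the face-centered bipyramid upper bound $vol(B(R_n)) \le 2(n-2)v_{tet} + 2v_{oct}$ traps $vol(R_n^m)$ within an absolute constant ($<7$) of $2nv_{tet}$, independent of $m$. Hence $\mathcal{D}_{vol}(R_n^m)$ lies within $7/(n+m)$ of $2nv_{tet}/(n+m)$, and one simply chooses the ratio $m/n$ so that $2nv_{tet}/(n+m)$ approximates $x$, then scales $n$ and $m$ by a common factor $\alpha$ (preserving the ratio and the congruence conditions of Lemma \ref{whenknot}) to make the error $7/(\alpha n + \alpha m)$ small. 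If you import that two-sided volume estimate, your consecutive gaps become $O(1/(n+m))$ uniformly in $m$ and your argument closes---but at that point the direct computation already finishes the proof and the intermediate-value step is superfluous.
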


\begin{proof}

Let $x \in [0,2v_{tet}]$.  Given an $\varepsilon >0$, the goal is to give an $R_n^m$ satisfying Lemma \ref{whenknot} such that
\[ \left|\mathcal{D}_{vol}(R_n^m)-x\right|<\varepsilon.\]

For $x=2v_{tet}$, we have already found such a maximal sequence of rational knots in Lemma \ref{maxseq}. For $x\in [0,2v_{tet})$, choose $m$ and $n$ such that $m\geq 1$, $m$ even, $n \geq 4$, $n \equiv 0,1\ (\textrm{mod}\ 3)$, and 
\[\left|\frac{2nv_{tet}}{n+m-1}-x\right|<\frac{\varepsilon}{2}.\]
From \cite{ThurstonNotes} it is known that
\[\lim_{m \to \infty} vol(R_n^m) = vol(B(R_n))\] 
and that
\[vol(R_n)<vol(B(R_n)).\]
Together these statements imply that for any $n\geq 2$ there exists an $M$ such that for all $m>M$, 
\[vol(R_n)<vol(R_n^m).\]

For such an $m$ we have the following inequality, where the lower bound follows from \cite[Theorem B.3]{Gueritaud} and the upper bound follows from the face-centered bipyramids decomposition of the complement of $B(R_n)$ \cite{Adams2015}:

\[2(n-2)v_{tet}-2.7066< vol(R_n)<vol(R_n^m)<vol(B(R_n))\leq 2(n-2)v_{tet}+2v_{oct}.\] 
Together this yields
\[\left|vol(R_n^m)-2nv_{tet}\right|<4v_{tet}+2.7066<7.\]
Dividing by crossing number, this becomes
\[ \left| \mathcal{D}_{vol}(R_n^m)-\frac{2nv_{tet}}{c(R_n^m)}\right|<\frac{7}{n+m}.\]
Even with the previous constraints on $m$ and $n$, we may choose a sufficiently large $\alpha \in \mathbb{N}$, $\alpha \equiv 0\ (\textrm{mod}\ 3)$ such that
\[\left|\mathcal{D}_{vol}(R_{\alpha n}^{\alpha m})-\frac{2\alpha nv_{tet}}{c(R_{\alpha n}^{\alpha m})}\right|<\frac{7}{\alpha n+\alpha m}<\varepsilon/2.\]
Then setting $M=\alpha m$ and $N= \alpha n$, $M$ and $N$ satisfy Lemma \ref{whenknot} and 
\[\biggl| \mathcal{D}_{vol}(R_{N}^{M})-x \biggr| \leq \left|\mathcal{D}_{vol}(R_{N}^{M})-\frac{2Nv_{tet}}{N+M}\right| + \left|\frac{2Nv_{tet}}{N+M}-x\right|<\varepsilon. \]

\end{proof}

\begin{theorem}
\label{volume dense and bounded} The set
$\mathcal{C}_{vol}^{rat}$ is a dense subset of $[0,2 v_{tet}]$.
\end{theorem}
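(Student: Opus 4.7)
The statement is essentially a direct corollary of the two lemmas immediately preceding it, so the plan is short.

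First, Lemma \ref{volume bounded} already establishes the containment $\mathcal{C}_{vol}^{rat}\subseteq [0, 2v_{tet}]$, since every hyperbolic rational link has volume density strictly less than $2v_{tet}$, and trivially the density is nonnegative. So the only remaining content is density within this interval.

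Second, Lemma \ref{Volumes dense} already proves that the set of volume densities of rational \emph{knots} is dense in $[0, 2v_{tet}]$. Since every rational knot is a rational link, the set of volume densities of rational knots is a subset of $\mathcal{C}_{vol}^{rat}$. Therefore $\mathcal{C}_{vol}^{rat}$ contains a dense subset of $[0, 2v_{tet}]$ and is contained in $[0, 2v_{tet}]$, from which the theorem follows immediately.

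In writing this up, the plan is simply to cite Lemma \ref{volume bounded} for the upper bound, cite Lemma \ref{Volumes dense} for the density of knot-valued volume densities, and observe the inclusion of rational knots in rational links to transfer the density statement. There is no genuine obstacle here; all of the real work has been done in the preceding lemmas, and the theorem is effectively a bookkeeping step that promotes the knot-only density result of Lemma \ref{Volumes dense} to a statement about all rational links while simultaneously recording the sharp upper bound from Lemma \ref{volume bounded}.
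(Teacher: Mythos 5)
Your proposal matches the paper's proof exactly: the theorem is stated in the paper as following immediately from Lemma \ref{volume bounded} (the upper bound) and Lemma \ref{Volumes dense} (density via rational knots), which is precisely the bookkeeping you describe. No issues.
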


\begin{proof}
This follows immediately from Lemma \ref{volume bounded} and Lemma \ref{Volumes dense}.
\end{proof}

\section{Determinant densities of rational links}

We now consider the set of determinant densities of rational links. We prove that  $\mathcal{C}_{det}^{rat}$ is a dense subset of $[0,2 \pi \log(\phi)]$ where $\phi$ is the golden ratio. It is interesting to note that unlike  the case of general hyperbolic links, the closure of $\mathcal{C}_{vol}^{rat}$ is a strict subset of the closure of $\mathcal{C}_{det}^{rat}$. 

To begin, we note that any rational link in our specified projection has a series-parallel Tait graph. Then by \cite{Stoimenow2007}, for all rational links $L$, $\mathcal{D}_{det}(L) < 2 \pi \log(\phi)$. We show that this upper bound is the least possible.

\begin{lemma}
\label{det UB}
The determinant densities of the sequence of rational links $R_n$ monotonically increase towards an upper bound of  $2 \pi \log(\phi)$, where $\phi$ is the golden ratio.
\end{lemma}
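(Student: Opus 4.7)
The plan is to identify $\det(R_n)$ with an explicit Fibonacci number and then analyze $\mathcal{D}_{det}(R_n)$ via Binet's formula. Since $R_n$ corresponds to the continued fraction $[1,1,\dots,1]$ with $n$ ones, whose value is the ratio of consecutive Fibonacci numbers $F_{n+1}/F_n$, the classical identity $\det(L(p/q)) = p$ for $2$-bridge links yields $\det(R_n) = F_{n+1}$. Equivalently, the determinant obeys the Fibonacci recurrence $\det(R_n) = \det(R_{n-1}) + \det(R_{n-2})$, obtained by applying the skein-type identity for link determinants at a crossing of the final twist level (or by deletion-contraction on the Tait graph), with base cases $\det(R_1) = 1$ and $\det(R_2) = 2$.

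Substituting Binet's formula $F_{n+1} = (\phi^{n+1} - \hat\phi^{n+1})/\sqrt{5}$, with $\hat\phi = -1/\phi$, into $\mathcal{D}_{det}(R_n) = 2\pi\log(F_{n+1})/n$ yields
\[
\mathcal{D}_{det}(R_n) = 2\pi\log\phi + \frac{2\pi}{n}\log\!\left(\frac{\phi}{\sqrt{5}}\bigl(1 - (-\phi^{-2})^{n+1}\bigr)\right).
\]
For the upper bound, I would verify $F_{n+1} \le \phi^n$ for all $n \ge 0$, which using $\sqrt{5} - \phi = 1/\phi$ reduces to an elementary estimate on the Binet remainder, with strict inequality for $n \ge 1$; this immediately gives $\mathcal{D}_{det}(R_n) < 2\pi\log\phi$. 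Since the second term on the right tends to $0$ as $n \to \infty$, the sequence also converges to $2\pi\log\phi$.

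For monotonicity, write $\mathcal{D}_{det}(R_n) = 2\pi\log\phi - c_n/n$ where $c_n > 0$ converges exponentially fast to $c_\infty := -2\pi\log(\phi/\sqrt{5})$, with $|c_n - c_\infty| = O(\phi^{-2n})$. The consecutive-term gap produced by the leading $c_\infty/n$ is $\Theta(1/n^2)$, which dominates the exponentially small Binet oscillation and forces $\mathcal{D}_{det}(R_n)$ to strictly increase. This last step is the main technical obstacle: the Binet correction $\log(1 - (-\phi^{-2})^{n+1})$ changes sign with the parity of $n$, so one must show $F_{n+1}^{n+1} < F_{n+2}^n$ uniformly in $n$, an elementary Fibonacci inequality that can be checked directly for small $n$ and then bootstrapped via the explicit Binet bounds for all larger $n$.
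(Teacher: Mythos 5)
Your proposal is correct and follows essentially the same route as the paper: identify $\det(R_n)$ with a Fibonacci number (the paper cites Kauffman for this) and apply Binet's formula to obtain the limit $2\pi\log\phi$, with the strict upper bound coming from $F_{n+1}\le\phi^{n}$. The only place you diverge is monotonicity, where the paper simply asserts that ``taking the derivative'' in $n$ works, while you correctly flag the sign-alternating Binet correction and supply the $\Theta(1/n^2)$-versus-$O(\phi^{-2n})$ comparison needed to make that step rigorous --- your treatment is, if anything, more careful than the paper's.
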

\begin{proof}
The determinant of $R_{n}$ is the $n$-th Fibonacci number \cite{Kauffman}, which we  denote $F_n$. Noting that the $n$-th Fibonacci number can be written as $\frac{\phi^n - \psi^n}{\sqrt{5}}$, where $\psi = \frac{1-\sqrt{5}}{2} < 1$, we have
\[\lim_{n \to \infty} \frac{2 \pi \log \det(R_n)}{c(R_n)} = \lim_{n \to \infty} \frac{2 \pi}{n} \log (\frac{\phi^{n+1}-\psi^{n+1}}{\sqrt{5}}) = 2 \pi \log \phi.\] 
Moreover, taking the derivative of $\mathcal{D}_{det}(R_{n})$with respect to $n$ shows that the determinant density increases monotonically with $n$. 
\end{proof}

\begin{lemma}
\label{Determinants dense}
The set of determinant densities for rational knots is dense in the interval $[0,2\pi\log\phi]$. 
\end{lemma}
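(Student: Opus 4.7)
The plan is to parallel Lemma \ref{Volumes dense} by working with the same sequence $R_n^m$ but tracking the determinant rather than the volume. Fix $x \in [0, 2\pi\log\phi]$ and $\varepsilon > 0$; the goal is to produce an $R_n^m$ satisfying Lemma \ref{whenknot} with $|\mathcal{D}_{det}(R_n^m)-x|<\varepsilon$.

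First, I would compute $\det(R_n^m)$ via a spanning tree argument modeled on the one used for $W_n^m$ in Section 2. The Tait graph of $R_n^m$ is obtained from that of $R_n$ by replacing the single edge $e$ corresponding to the last twist level by $m+1$ parallel edges, so
\[\det(R_n^m) = (m+1)\, T_e(R_n) + T_{\bar e}(R_n) = F_n + m\, T_e(R_n),\]
where $T_e(R_n)$ and $T_{\bar e}(R_n)$ count spanning trees of the Tait graph of $R_n$ that contain, respectively omit, $e$. Contracting or deleting $e$ identifies these counts with determinants of smaller rational link diagrams, so $T_e(R_n)$ is itself a Fibonacci number and, in particular, grows like $\phi^n/\sqrt{5}$. (Equivalently, the linear dependence on $m$ can be read directly from the continued fraction $[1,1,\dots,1,m+1]$ representing $R_n^m$.)

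Combining this with $c(R_n^m)=n+m$ and $\log F_n = n\log\phi + O(1)$ yields
\[\mathcal{D}_{det}(R_n^m) = \frac{2\pi\bigl(n\log\phi + \log m\bigr) + O(1)}{n+m}\]
as $n,m \to \infty$. For $x \in (0, 2\pi\log\phi]$, set $r = \tfrac{2\pi\log\phi}{x} - 1 \in [0,\infty)$ and take $m \approx rn$; then the displayed expression converges to $\tfrac{2\pi\log\phi}{1+r}=x$. For $x=0$, fix $n=4$ and let $m\to\infty$ even, so that the $\log m / m$ decay drives the density to $0$. In either case a mild rescaling $(n,m)\mapsto(\alpha n, \alpha m)$, with $\alpha$ chosen so that $\alpha n \equiv 0,1 \pmod 3$ and $\alpha m$ is even, brings the resulting $R_{\alpha n}^{\alpha m}$ into the class of knots from Lemma \ref{whenknot} without disturbing the limit.

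The main obstacle is the first step: correctly identifying $T_e(R_n)$ (equivalently, the coefficient of $m$ in $\det(R_n^m)$) as a Fibonacci-type quantity of order $\phi^n$, so that the linear dependence of $\det(R_n^m)$ on $m$ is pinned down explicitly. Once this formula is in hand, the remaining asymptotic estimates are routine and the congruence/parity adjustments mirror those at the end of the proof of Lemma \ref{Volumes dense}.
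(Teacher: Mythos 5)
Your proposal is correct and follows essentially the same route as the paper: the paper cites Kauffman's recursive spanning-tree formula to get $\det(R_n^m)=(m+1)F_n+F_{n-1}$, which is exactly the linear-in-$m$ expression you derive from the Tait graph, and then makes the same asymptotic choice $m\approx\left(\tfrac{2\pi\log\phi}{x}-1\right)n$ with the congruence conditions of Lemma \ref{whenknot} imposed at the end. The step you flag as the main obstacle is handled in the paper by citation, and your deletion--contraction identification of the coefficient of $m$ as a Fibonacci number of order $\phi^n$ is the standard way to establish it.
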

\begin{proof}

Let $x \in [0,2\pi\log\phi]$ and let $R_{n}^m$ be the rational knot as defined previously. Given $\varepsilon>0$, the goal is to find an $R_{n}^m$ such that \[\left| \frac{2\pi\log det(R_{n}^m)}{c(R_{n}^m)}-x \right| < \varepsilon. \]
 By the recursive formula for the number of spanning trees \cite{Kauffman}, $det(R_{n}^m) = (m+1) F_n+F_{n-1}$.  Also, $c(R_{n}^m)= m+n$. In other words, we want to find an $m$ and $n$ such that \[ \left|  \frac{2\pi\log ((m+1) F_{n}+F_{n-1})}{m+n}-x \right| < \varepsilon. \]
Observe that \[ \left| \frac{2\pi}{m+n} \log((m+1) F_n + F_{n-1}) - \frac{2\pi}{m+n} \log((m+1) F_n) \right| = \frac{2\pi}{m+n}\log(1+ \frac{F_{n-1}}{(m+1) F_{n}}) \leq \frac{2\pi}{m+n}\log(2).\]
Thus for sufficiently large $n$ \begin{equation} \left| \frac{2\pi}{m+n} \log((m+1) F_{n} + F_{n-1}) - \frac{2\pi}{m+n} \log((m+1) F_{n}) \right| < \frac{\varepsilon}{3}. \end{equation}
From the explicit formula for the $n$-th Fibonacci number,
\[ \frac{2\pi}{m+n} \log((m+1) F_{n}) = \frac{2\pi}{m+n} (\log(m+1) + \log(\phi^{n}- \psi^{n}) - \log(\sqrt{5})). \]
It is therefore also the case that for sufficiently large $n$ \begin{equation} \left| \frac{2\pi}{m+n} (\log(m+1) + \log(\phi^{n}- \psi^{n}) - \log(\sqrt{5})) - \frac{2\pi}{m+n} (\log(m+1) + \log(\phi^{n})) \right| < \frac{\varepsilon}{3}. \end{equation}
Finally, we can choose $m$ and $n$ according to Lemma \ref{whenknot} such that $n$ satisfies (1) and (2), and \begin{equation}\left| \frac{2\pi}{m+n} (\log(m+1) + n\log(\phi)) - x\right| < \frac{\varepsilon}{3}. \end{equation} 
The triangle inequality with $(1), (2)$, and $(3)$ then yields 
\[\left| \frac{2\pi\log det(R_{n}^{m})}{c(R_{n}^{m})}-x \right| = \left| \frac{2\pi\log ((m+1) F_{n}+F_{n-1})}{m+n}-x \right| < \varepsilon. \]
\end{proof}

\begin{theorem}
\label{det dense and bounded}The set 
$\mathcal{C}_{det}^{rat}$ is a dense subset of $[0, 2 \pi \log(\phi)]$, where $\phi = \frac{1+\sqrt{5}}{2}$ is the golden ratio.
\end{theorem}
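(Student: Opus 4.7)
The plan is to observe that this theorem, like its volume density counterpart (Theorem \ref{volume dense and bounded}), is really just a packaging result: it combines an upper bound on $\mathcal{C}_{det}^{rat}$ with a density statement to conclude that the closure of $\mathcal{C}_{det}^{rat}$ equals the interval $[0, 2\pi \log(\phi)]$. All of the real work has been done already in the section.

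First I would invoke the upper bound. Since any rational link admits a standard alternating 2-bridge projection whose Tait graph is series-parallel (this was noted explicitly in the paragraph preceding Lemma \ref{det UB}), Stoimenow's theorem \cite{Stoimenow2007} gives $\mathcal{D}_{det}(L) < 2\pi \log(\phi)$ for every rational link $L$. This shows the containment $\mathcal{C}_{det}^{rat} \subseteq [0, 2\pi \log(\phi))$, and in particular $\overline{\mathcal{C}_{det}^{rat}} \subseteq [0, 2\pi \log(\phi)]$.

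Next I would apply Lemma \ref{Determinants dense}, which asserts that the determinant densities of the family $\{R_n^m\}$ of hyperbolic rational knots (which are in particular rational links) are dense in $[0, 2\pi \log(\phi)]$. Since rational knots form a subset of rational links, this immediately gives $[0, 2\pi \log(\phi)] \subseteq \overline{\mathcal{C}_{det}^{rat}}$.

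Combining the two inclusions yields $\overline{\mathcal{C}_{det}^{rat}} = [0, 2\pi \log(\phi)]$, which is the claim. There is no real obstacle here; the substance of the theorem lies in Lemma \ref{Determinants dense} (constructing the sequence of rational knots whose determinant densities hit an arbitrary target $x$ via the Fibonacci-type formula $\det(R_n^m) = (m+1)F_n + F_{n-1}$) and in the appeal to Stoimenow's series-parallel bound. The final theorem itself is a one-line deduction, exactly parallel to the proof of Theorem \ref{volume dense and bounded}.
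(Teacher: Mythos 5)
Your proposal is correct and matches the paper's proof, which simply combines the upper bound on $\mathcal{D}_{det}$ for rational links with the density statement of Lemma \ref{Determinants dense}. If anything, you are slightly more careful than the paper in attributing the upper bound to the Stoimenow series-parallel result stated before Lemma \ref{det UB}, rather than to Lemma \ref{det UB} itself, which only treats the specific sequence $R_n$.
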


\begin{proof}
This follows immediately from Lemma \ref{det UB} and Lemma \ref{Determinants dense}.
\end{proof}

\bibliographystyle{amsalpha}
\bibliography{SMALL}

\end{document}